\newtheorem{theorem}{Theorem}[section]
\newtheorem{lemma}[theorem]{Lemma}
\newtheorem{corollary}[theorem]{Corollary}
\newtheorem{remark}[theorem]{Remark}
\numberwithin{equation}{section}
\newcommand{\stirlingii}{\genfrac{[}{]}{0pt}{}}
\newcommand{\myitem}[1]{%
\item[#1]\protected@edef\@currentlabel{#1}%
}
\begin{document}
\title[Bounding the number of subgroups]{An explicit upper bound on the number of subgroups of a finite group} 

\author[P. Spiga]{Pablo Spiga}
\address{Pablo Spiga,
Dipartimento di Matematica e Applicazioni, University of Milano-Bicocca,\newline
Via Cozzi 55, 20125 Milano, Italy}\email{pablo.spiga@unimib.it}


\begin{abstract}
In this paper we prove that a finite group of order $r$ has at most
$$ 7.3722\cdot r^{\frac{\log_2r}{4}+1.5315}$$
subgroups. 
\end{abstract}

\keywords{subgroups, bound}
\subjclass[2010]{20D99}
\maketitle

\section{Introduction}
Let $R$ be a finite group of cardinality $r$. Since a chain of subgroups of $R$ has length at most $\log_2r$, we deduce that every subgroup $H$ of $R$ has a generating set of cardinality at most $\lfloor\log_2r\rfloor$. In particular, since the generators of $H$ are elements of $R$, we have at most $$r^{\lfloor\log_2r\rfloor}\le r^{\log_2r}$$
choices for $H$. In other words, $R$ has at most $r^{\log_2r}$ subgroups. This is the typical argument for bounding the number of subgroups of a finite group $R$.

This bound is not too far off from the best possible. In fact, an elementary abelian $2$-group of order $r:=2^a$ has
\[
\stirlingii{a}{\lfloor \frac{a}{2}\rfloor}_2:=
\frac{2^a-1}
{2^{\lfloor\frac{a}{2}\rfloor}-1}\frac{2^{a-1}-1}{2^{\lfloor\frac{a}{2}\rfloor-1}-1}\cdots\frac{2^{a-\lfloor\frac{a}{2}\rfloor+1}-1}{2-1}
\]
subgroups of order $2^{\lfloor a/2\rfloor}$. Since $\stirlingii{a}{\lfloor \frac{a}{2}\rfloor}_2\ge 2^{a^2/4}$, $R$ has at least $2^{a^2/4}=r^{\log_2r/4}$ subgroups. 

Borovik, Pyber and Shalev~\cite[Corollary~$1.6$]{BPS} have shown that a finite group of order $r$ has at most $$r^{\log_2(r)\cdot \left(\frac{1}{4}+o(1)\right)}$$ subgroups. Therefore, in the light of the previous paragraph, this bound is somehow best possible.

In applications, however, it is sometimes useful to have an explicit upper bound rather than an asymptotic result. For instance, in proving the Babai-Godsil conjecture~\cite{MorrisSpiga} on the asymptotic enumeration of Cayley digraphs, the authors have used numerous times the trivial bound $r^{\log_2r}$ in their argument. However, there  are applications where the naive bound $r^{\log_2 r}$ does not suffice and the $1/4$ improvement in the exponent might give considerable help. For instance, this turns out useful in investigating the asymptotic enumeration of Haar graphs~\cite{Spiga}.

The fundamental part of the argument in~\cite{BPS} comes from an estimate on the number of maximal subgroups of a finite group. The papers~\cite{B,LPS} do obtain very useful information on the number of maximal subgroups in a finite group; however, the implicit constants make it difficult to extract  explicit bounds. Analogously, the bounds in~\cite{BEJ} are very explicit, but they do depend on the number of generators of $R$. Hence, it seems cumbersome to use~\cite{BEJ} for obtaining an explicit upper bound on the number of subgroups of a finite group.

In this paper we prove the following result.

\begin{theorem}\label{thrm:main}
A finite non-identity group  $R$ has at most $7.3722\cdot|R|^{\frac{\log_2|R|}{4}+1.5315}$ subgroups.
\end{theorem}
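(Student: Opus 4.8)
The plan is to prove the bound by induction on $|R|$, peeling off one chief factor at a time. The naive approach --- every proper subgroup lies in a maximal subgroup, so $s(R)\le 1+\sum_{M}s(M)$ over the maximal $M$ --- recovers only the trivial bound $r^{\log_2 r}$, since an elementary abelian $2$-group already has about $r$ maximal subgroups; the factor $\tfrac{1}{4}$ cannot come from such a recursion and must instead be extracted from the module-theoretic (Gaussian-binomial) structure that produced the lower bound in the introduction. So I would fix a minimal normal subgroup $N$ of $R$ and classify each subgroup $H\le R$ by the pair $(\bar H, K)$, where $\bar H = HN/N\le R/N$ and $K=H\cap N\le N$.

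The combinatorial heart is a transversal-counting lemma: for fixed $\bar H$ and fixed $K$, the number of subgroups $H$ with $HN/N=\bar H$ and $H\cap N=K$ is at most $|N:K|^{d(\bar H)}$, where $d(\bar H)$ is the minimal size of a generating set of $\bar H$. Indeed, once $K$ is fixed, each of the $d(\bar H)$ chosen generators of $\bar H$ may be lifted to an element of $H$ only within a single coset of $N$, and inside that coset the elements of $H$ form one of the $|N:K|$ cosets of $K$; these $d(\bar H)$ choices determine $H=\langle K,\dots\rangle$. This yields the master inequality
\[
s(R)\ \le\ \sum_{\bar H\le R/N}\ \sum_{K\le N}\ |N:K|^{\,d(\bar H)} .
\]
Applied with $N$ elementary abelian of order $2$ to $R=\mathbb{F}_2^{a}$, the inner sum becomes $\sum_{\bar H}(1+2^{\dim\bar H})$, and the estimate $\binom{a-1}{j}_2\le 2^{j(a-1-j)}$ caps each contribution by $2^{j(a-j)}\le 2^{a^2/4}=r^{\log_2 r/4}$; this is exactly where the $\tfrac{1}{4}$ is born, and the surviving polynomial-in-$a$ factors are what I would absorb into the additive exponent $1.5315$ and the leading constant.

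To run the induction in general I would split according to the two possibilities for the chief factor $N$. When $N$ is elementary abelian of order $p^{m}$, the inner double sum is an honest $p$-ary Gaussian-binomial sum, and the work is to bound it explicitly and uniformly in $p$ and $m$ so that the inductive hypothesis on $R/N$ combines with the weight $|N:K|^{d(\bar H)}$ to close with the claimed exponent; the case $p=2$ is extremal and dictates the constant. When $N=T^{k}$ with $T$ a nonabelian simple group, I would instead need explicit control on $s(N)$ and on $d(\bar H)$, which forces CFSG-based facts about simple and almost simple groups --- that they are $2$-generated, have relatively few (conjugacy classes of) subgroups, and above all have an explicitly bounded number of maximal subgroups.

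This last point is the main obstacle, and it is precisely the gap flagged in the introduction: the sharp asymptotic counts of maximal subgroups in \cite{B,LPS} carry non-explicit constants, while \cite{BEJ} depends on the number of generators. So the crux is to supply a self-contained, fully explicit bound on the number of maximal (hence of all) subgroups of the almost simple building blocks, with constants weak enough to be provable by hand yet strong enough not to corrupt the $\tfrac{1}{4}$ in the exponent. Once that input is in place, I would assemble the two cases into a single inductive step, verify that the recursion is closed by a function of the form $C\,r^{\log_2 r/4+b}$, and finally optimize $C$ and $b$ against the base cases and the accumulated lower-order factors to reach $C=7.3722$ and $b=1.5315$.
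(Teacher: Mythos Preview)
Your plan is essentially the Borovik--Pyber--Shalev route of \cite{BPS}: induct along a chief series, classify subgroups by $(\bar H,K)$, and control the fibres by the transversal-counting lemma. The paper takes a genuinely different and more elementary path that never touches chief factors. It factors $r=\prod_i p_i^{a_i}$ and observes that every subgroup $H$ is determined by a tuple $(Q_i)$ of Sylow subgroups of $H$; each $Q_i$ sits in some Sylow $p_i$-subgroup $P_i$ of $R$, and by Sylow's theorem there are at most $r^{\ell-1}$ choices for the tuple $(P_i)$. For fixed $(P_i)$ one then only needs an explicit bound $S(p,a)$ on the number of subgroups of a $p$-group of order $p^a$, supplied by Gaussian binomials via \cite{Shalev}. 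Everything after that is arithmetic: showing $rS(p_i,a_i)\le p_i^{a_i\log_2 r/4}$ prime by prime, and invoking Burnside's $p$-complement theorem to handle the finitely many small-prime exceptions that force the constants $7.3722$ and $1.5315$.

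The decisive difference is that the paper's argument never encounters a nonabelian chief factor, never invokes the classification, and never needs any count of maximal subgroups of almost simple groups. The obstacle you yourself isolate --- turning the asymptotic maximal-subgroup bounds of \cite{B,LPS} into explicit constants that still preserve the $\tfrac14$ --- is exactly what the paper is designed to circumvent, and you have located it but not removed it: without that input your induction does not close for any specific $C$ and $b$. So your scheme is in principle sound but incomplete at its hardest step, whereas the paper trades that difficulty for the extended prime-by-prime case analysis of Section~\ref{arithmetic} and obtains a fully elementary proof.
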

Our group theoretic argument is entirely elementary and we give a sketch of the proof in Section~\ref{group}. This part is essentially as in~\cite{BPS}. However, rather than reducing to certain maximal subgroups, we make some detailed arithmetic considerations that allow us to prove Theorem~\ref{thrm:main}. We do believe that similar considerations can prove the upper bound $7.3722\cdot|R|^{\log_2|R|}$; however, this seems to require (at least with our method) some very long arithmetic arguments.

\section{A group theoretic argument}\label{group}
In this section we give a sketch of our  proof of Theorem~\ref{thrm:main}. 

\begin{proof}[Sketch of the proof of Theorem~$\ref{thrm:main}$: part I]
Let $R$ be a finite group and let $r$ be the order of $R$. Without loss of generality, we may suppose that $r\ge 2$. Now, we factorize $$r=\prod_{i=1}^\ell p_i^{a_i}$$ into prime factors; in particular, $p_1,\ldots,p_\ell$ are distinct primes and $a_i\ge 1$ for each $i\in\{1,\ldots,\ell\}$. Relabeling the indexed set if necessary, we may suppose that $p_1<\cdots<p_\ell$. 

Let $H$ be a subgroup of $R$. Then $H$ is uniquely determined by a family $(Q_i)_{i}$ of Sylow $p_i$-subgroups of $H$, for each $i\in \{1,\ldots,\ell\}$. Each of these  subgroups $Q_i$ is contained in a Sylow $p_i$-subgroup $P_i$ of $R$. Now, from Sylow's theorem, all Sylow $p_i$-subgroups of $R$ are conjugate and hence $R$ has at most $r/p_i^{a_i}$ Sylow $p_i$-subgroups. Therefore, we have at most $$\prod_{i=1}^\ell\frac{r}{p_i^{a_i}}=r^{\ell-1}$$ choices for the $\ell$-tuple $(P_i)_i$.

In~\eqref{variousS} we define a function $S(p,a)$ such that every $p$-group of order $p^a$ has at most $S(p,a)$ subgroups, see Remark~\ref{remark1}. At this point, let us ignore what this function is and let us see what we may deduce.
 When $(P_i)_i$ is given, since $Q_i$ is a subgroup of $P_i$ and since $P_i$ is a $p_i$-group, from the previous paragraph, we see that we have at most 
$$\prod_{i=1}^\ell S(p_i,a_i)$$
choices for the $\ell$-tuple $(Q_i)_i$.

From above, $R$ has at most
\begin{align}\label{eq:AA}
r^{\ell-1}\cdot\prod_{i=1}^\ell S(p_i,a_i)=r^{-1}\prod_{i=1}^\ell r\cdot S(p_i,a_i)
\end{align} 
subgroups.

The bulk of the argument in Section~\ref{arithmetic} is proving that
\begin{equation}\label{ex}rS(p_i,a_i)\le p_i^{a_i\frac{\log_2r}{4}},
\end{equation}
unless $p_i\le 23$. Actually, Section~\ref{arithmetic} proves much more than that and hence there is room for hoping for an improvement on $7.3722\cdot r^{\log_2 r/4+1.5315}$. For instance, in the particular case that $p_1>23$, from~\eqref{eq:AA} and~\eqref{ex} we deduce the stronger upper bound 
$$r^{-1}\prod_{i=1}^\ell rS(p_i,a)\le r^{-1}\prod_{i=1}^\ell p_i^{a_i\frac{\log_2 r}{4}}=r^{\frac{\log_2 r}{4}-1}.$$ Our weaker bound $7.3722\cdot r^{\frac{\log_2r}{4}+1.5315}$ arises from dealing with small primes in the factorization of $r$. 

We postpone the rest of the proof after Section~\ref{arithmetic}.
\end{proof}

\section{Arithmetical observations}\label{arithmetic}

For each prime number $p$, we let
\begin{align}\label{eq:cp}
C(p)&:=\prod_{i\ge 1}\frac{1}{1-\frac{1}{p^{i}}},\\\nonumber
c(p)&:=2.129\cdot C(p).
\end{align}
Now, let $p$ be a prime number and let $a$ be a positive integer. We define
\begin{equation}\label{variousS}
S(p,a):=
\begin{cases}
2&\textrm{when }a:=1,\\
p+3&\textrm{when }a:=2,\\
2p^2+2p+4&\textrm{when }a:=3,\\
p^4+3p^3+4p^2+3p+5&\textrm{when }a:=4,\\
2p^6+2p^5+6p^4+6p^3+6p^2+4p+6&\textrm{when }a:=5,\\
c(p)p^{\frac{a^2}{2}}&\textrm{when }a\ge 6.
\end{cases}
\end{equation}
Strictly speaking we give more details than it is barely necessary for the proof of Theorem~\ref{thrm:main}. We hope that this information can be used in the future for improving the bound in Theorem~\ref{thrm:main}.
\begin{remark}\label{remark1}{\rm Let $P$ be a $p$-group of order $p^a$. We observe here that $P$ has at most $S(p,a)$ subgroups, where $S(p,a)$ is defined in~\eqref{variousS}.

Let $k\in \{0,\ldots,a\}$. Corollary~$4.2$ in~\cite{Shalev} shows that the number of subgroups of $P$ having index $p^k$ is at most
$$\stirlingii{a}{k}_p\le C(p)p^{a(n-k)},$$
where $C(p)$ is defined in~\eqref{eq:cp}. In particular, the number of subgroups of $P$ is at most
\[
\sum_{k=0}^a
\stirlingii{a}{k}_p.
\]
When $a\le 5$, we see with a computation that this summation is exactly $S(p,a)$. For instance, when $a=4$, we have
\begin{align*}
\stirlingii{a}{0}_p+\stirlingii{a}{1}_p+\stirlingii{a}{2}_p+\stirlingii{a}{3}_p+\stirlingii{a}{4}_p&=1+\frac{p^4-1}{p-1}+\frac{(p^4-1)(p^3-1)}{(p^2-1)(p-1)}+\frac{p^4-1}{p-1}+1\\
&=2(p^3+p^2+p+1+2)+p^4+p^3+2p^2+p+1\\
&=S(p,a).
\end{align*}
Assume now that $a\ge 6$. 

Suppose that $a$ is even. Then the number of subgroups of $R$ is at most
\begin{align*}
C(p)\sum_{k=0}^ap^{k(a-k)}&=C(p)\cdot\left(p^{\frac{a^2}{4}}+2\sum_{k=0}^{\frac{a}{2}-1}p^{k(a-k)}\right)=C(p)p^{\frac{a^2}{4}}\left(1+2\sum_{k=1}^{\frac{a}{2}}\frac{1}{p^{k^2}}\right)\\
&\le C(p)p^{\frac{a^2}{4}}\left(-1+2\sum_{k=0}^{\infty}\frac{1}{p^{k^2}}\right)\le 2.129C(p)p^{\frac{a^2}{4}}=S(p,a),
\end{align*}
where the value $2.129$ is obtained by taking $p:=2$ in the infinite sum. Suppose that $a$ is odd. Then the number of subgroups of $R$ is at most
\begin{align*}
C(p)\sum_{k=0}^ap^{k(a-k)}&
=C(p)\cdot 2\sum_{k=0}^{\frac{a-1}{2}}p^{k(a-k)}=C(p)\cdot 2 p^{\frac{a^2-1}{4}}\sum_{k=0}^{\frac{a-1}{2}}\frac{1}{p^{k(k+1)}}\\
&\le C(p)\cdot 2 p^{\frac{a^2-1}{4}}\sum_{k=0}^{\infty}\frac{1}{p^{k(k+1)}}\le 2.53175 C(p)p^{\frac{a^2-1}{4}}\\
&\le 2.129 C(p)p^{\frac{a^2}{4}}=S(p,a),
\end{align*}
where the value $2.53175$ is obtained by taking $p:=2$ in the infinite sum and where $2.129$ is obtained by multiplying $2.53175$ with $2^{-1/4}$.  Summing up, regardless of whether $a$ is odd or even,  $R$ has at most  $S(p,a)$ subgroups.}
\end{remark}

In this section we prove various upper bounds on $S(p,a)$. We give here the first lemma that in part explains the role of $7.3722\cdot$ in the upper bound in Theorem~\ref{thrm:main}.
\begin{lemma}\label{lemma0}Let $p$ be a prime number, let $a$ be a positive integer and let $r$ be a multiple of $p^a$. Then $S(p,a)\le 7.3722\cdot p^{a\frac{\log_2 r}{4}}$.
\end{lemma}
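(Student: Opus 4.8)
The plan is to funnel every case into the single uniform estimate
\[
S(p,a)\le c(p)\,p^{a^2/4}
\]
and then spend the remaining slack on two elementary facts, $c(p)\le c(2)\le 7.3722$ and $\log_2 p\ge 1$. With this in hand the argument is just a chain of inequalities, with essentially no case analysis.

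First I would record the uniform bound $S(p,a)\le c(p)p^{a^2/4}$ for every $a\ge 1$. For $a\ge 6$ this is how $S(p,a)$ is set up in Remark~\ref{remark1} (the dominant term of $\sum_k p^{k(a-k)}$ being $p^{a^2/4}$). For $a\le 5$, Remark~\ref{remark1} identifies $S(p,a)$ with the exact count $\sum_{k=0}^a\stirlingii{a}{k}_p$, and the even/odd summation estimates carried out there, namely $\sum_{k=0}^a\stirlingii{a}{k}_p\le C(p)\sum_{k=0}^a p^{k(a-k)}\le c(p)p^{a^2/4}$, are in fact valid for all $a\ge 1$ and not merely for $a\ge 6$; alternatively one checks the five tabulated polynomials directly against $c(p)p^{a^2/4}$.

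Next come the two monotonicity observations. Since each factor $(1-p^{-i})^{-1}$ of $C(p)$ is decreasing in $p$, the product $C(p)$, and hence $c(p)=2.129\,C(p)$, attains its maximum at $p=2$; evaluating $C(2)=\prod_{i\ge1}(1-2^{-i})^{-1}\le 3.46275$ gives $c(p)\le c(2)=2.129\,C(2)\le 7.3722$, which is precisely where the constant in the statement comes from. Secondly, $p\ge 2$ forces $\log_2 p\ge 1$, whence $p^{a^2/4}\le p^{a^2(\log_2 p)/4}$.

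Finally, because $p^a\mid r$ we have $r\ge p^a$ and therefore $a\frac{\log_2 r}{4}\ge\frac{a^2\log_2 p}{4}$. Stringing everything together yields
\[
S(p,a)\le c(p)\,p^{a^2/4}\le c(2)\,p^{a^2/4}\le 7.3722\,p^{a^2/4}\le 7.3722\,p^{\frac{a^2\log_2 p}{4}}\le 7.3722\,p^{a\frac{\log_2 r}{4}},
\]
which is the claim, the extremal instance being $p=2$, $r=2^a$ (where every $p$-power inequality becomes an equality and $c(2)\approx 7.3722$). The only genuine computations are pinning down $2.129\,C(2)\le 7.3722$ and verifying the uniform estimate in the range $a\le 5$, where $p^{a^2/4}$ only narrowly dominates the tabulated polynomials (the tightest case being $p=2$); I expect this low-exponent verification to be the one place that needs care, since there $S(p,a)\le c(p)p^{a^2/4}$ is not the bare definition but has to be read off from the exact subgroup count. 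Everything else is monotonicity.
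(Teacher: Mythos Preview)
Your proof is correct and follows essentially the same route as the paper: reduce to $r=p^a$ and use $c(p)\le c(2)<7.3722$, with the small-exponent cases handled by a short check. The only organizational difference is that you funnel every $a$ through the uniform bound $S(p,a)\le c(p)\,p^{a^2/4}$ (observing that the Remark~\ref{remark1} estimate is valid for all $a$, not just $a\ge 6$), whereas the paper verifies $a\in\{1,\dots,5\}$ one at a time directly against $7.3722\,p^{(a^2/4)\log_2 p}$.
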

\begin{proof}
Since $r\to\log_2 r$ is monotone increasing, we may suppose that $r=p^a$. In particular, $7.3722\cdot p^{a\frac{\log_2r}{4}}=7.3722\cdot p^{\frac{a^2}{4}\log_2p}$. Now the proof follows by distinguishing various possibilities for $a$. When $a=1$, we have $S(p,1)=2$ and $$7.3722\cdot p^{\frac{a^2}{4}\log_2p}=7.3722\cdot p^{\frac{\log_2p}{4}}\ge 7.3722\cdot 2^{1/4}=9.5136.$$ When $a=2$, $S(p,2)=p+3$ and $$7.3722\cdot p^{\frac{a^2}{4}\log_2p}=7.3722\cdot p^{\log_2p}\ge 7.3722\cdot p;$$ clearly, $p+3\ge 7.3722\cdot p$. The cases $a\in \{3,4,5\}$ are entirely similar.

Suppose $a\ge 6$. Now, $c(p)p^{\frac{a^2}{4}}=S(p,a)\le 7.3722\cdot p^{\frac{a^2}{4}}$ if and only if $c(p)\le 7.3722$. From~\eqref{eq:cp}, $p\mapsto c(p)$ is a monotone decreasing function and hence $c(p)\le c(2)=7.372187<7.3722$.
\end{proof}

\subsection{Dealing with one prime}\label{sub1}

\begin{lemma}\label{lemma1}
Let $p$ be a prime number and let $r$ be a positive multiple of $p$ with $\gcd(p,r/p) =1$. Then $r\cdot S(p,1)\le p^{\frac{\log_2r}{4}}$ unless one of the following holds
\begin{enumerate}
\item\label{lemma1:eq1}$p=23$ and $1\le r/p\le 8$,
\item\label{lemma1:eq2}$p=19$ and $1\le r/p\le 3\,784$,
\item\label{lemma1:eq3}$p\le 17$.
\end{enumerate}
\end{lemma}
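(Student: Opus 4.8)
The plan is to collapse the desired inequality into a single transparent condition and then run a short case analysis on $p$. Since $S(p,1)=2$ by~\eqref{variousS}, the assertion $r\cdot S(p,1)\le p^{\log_2 r/4}$ is nothing but $2r\le p^{\log_2 r/4}$. Taking base-$2$ logarithms and clearing the denominator, this is \emph{equivalent} to
\begin{equation*}
(\log_2 p-4)\,\log_2 r\ge 4.
\end{equation*}
I would record this equivalence first, because it makes the roles of the three exceptional cases completely transparent: everything is governed by the sign and size of the factor $\log_2 p-4$, equivalently by how far $p$ lies above $16$.

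The primes with $\log_2 p\le 4$, namely $p\le 13$, together with $p=17$, are exactly those collected in case~\eqref{lemma1:eq3}; for them the factor $\log_2 p-4$ is either nonpositive or positive but minuscule, and since they are all excepted there is nothing to prove. For every prime with $p>16$ the factor $\log_2 p-4$ is positive, so the left-hand side is strictly increasing in $r$. Hence, for fixed such $p$, the inequality holds if and only if $r\ge r_0(p):=2^{4/(\log_2 p-4)}$, and, writing $r=p\cdot(r/p)$, if and only if $r/p\ge r_0(p)/p$. Moreover the exponent $4/(\log_2 p-4)$ decreases as $p$ grows, so $r_0(p)$ is decreasing in $p$; this single monotonicity statement is what separates the three regimes $p\ge 29$, $p=23$, and $p=19$.

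For $p\ge 29$ I would show the inequality holds for every admissible $r$ by examining only the smallest one, $r=p$ (the case $r/p=1$). There the left-hand side equals $(\log_2 p)^2-4\log_2 p$, which is increasing in $p$ for $p>4$; evaluating at $p=29$ gives a value exceeding $4$, so the inequality holds at $r=p$ and, by the monotonicity in $r$, for all larger admissible $r$ as well. Thus no prime $p\ge 29$ can ever be an exception. It then remains only to locate the exact cutoffs for $p=23$ and $p=19$ by computing $r_0(p)/p$ and reading off the least admissible value of $r/p$: for $p=23$ one finds $r_0(23)/23$ strictly between $8$ and $9$, which yields case~\eqref{lemma1:eq1}, and for $p=19$ one finds $r_0(19)/19$ between $3784$ and $3785$, which yields case~\eqref{lemma1:eq2}.

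The genuinely delicate point is the $p=19$ threshold. Because $\log_2 19-4\approx 0.248$ is so small, the cutoff $r_0(19)\approx 7.19\cdot 10^4$ is enormous and the resulting bound on $r/p$ lands essentially exactly on the integer boundary near $3784$; at $r/p=3784$ the two sides $2r$ and $p^{\log_2 r/4}$ agree to within a relative error of order $10^{-4}$, and at $r/p=3785$ they are in near-equality. Rough estimates of $\log_2 19$ cannot distinguish these two cases, so the verification requires an exact computation of $2\cdot19\cdot3784$ against $19^{\log_2(19\cdot3784)/4}$ (and likewise for $3785$) to confirm that $r/p=3784$ fails while $r/p=3785$ succeeds. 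This tight numerical check is the main obstacle; conceptually the entire lemma is carried by the equivalence $(\log_2 p-4)\log_2 r\ge 4$ together with its monotonicity in $r$ and in $p$.
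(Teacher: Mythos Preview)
Your proof is correct and follows essentially the same route as the paper. Both arguments reduce to verifying the inequality at $r=p$ for $p\ge 29$ (your condition $(\log_2 p)^2-4\log_2 p>4$ is exactly the paper's $p^{\log_2(p)/4}>2p$ after taking logarithms), and both invoke monotonicity in $r$ (your factor $\log_2 p-4>0$ is the paper's $p^{1/4}>2$); the only cosmetic difference is that you take base-$2$ logarithms at the outset, while the paper first splits $p^{\log_2 r/4}=p^{\log_2(p)/4}\cdot p^{\log_2(r/p)/4}$ and bounds each factor. For $p\in\{19,23\}$ you compute the threshold $r_0(p)/p$ explicitly, whereas the paper simply cites a computer check; your careful discussion of the near-tie at $r/p=3784$ for $p=19$ is a genuine service, since the two sides differ there only in the fifth significant figure.
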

\begin{proof}
Here $S(p,1)=2$. 
The proof follows from easy computations, assisted with the computer. When $p\ge 29$, it can be verified that $p^{\log_2(p)/4}>2p$ and $p^{1/4}>2$. Therefore
\begin{align*}
p^{\frac{\log_2 r}{4}}&=p^{\frac{\log_{2}p}{4}}\cdot p^{\frac{\log_2(r/p)}{4}}>2p\cdot p^{\frac{\log_2(r/p)}{4}}\\
&>2p\cdot 2^{\log_2(r/p)}=2p\cdot (r/p)=2r=rS(p,1).
\end{align*}

Suppose now $p<29$. If $p\le 17$, then we obtain part~\eqref{lemma1:eq3}. If $p>17$, then $p\in \{19,23\} $ and parts~\eqref{lemma1:eq1} and~\eqref{lemma1:eq2} follow with a computer assisted computation.
\end{proof}

\begin{lemma}\label{lemma2}
Let $p$ be a prime number and let $r$ be a positive multiple of $p^2$ with $\gcd(p,r/p^2)=1$. Then $r\cdot S(p,2)\le p^{2\frac{\log_2r}{4}}$ unless one of the following holds
\begin{enumerate}
\item\label{lemma2:eq4}$p=7$ and $1\le r/p^2\le 6$,
\item\label{lemma2:eq5}$p= 5$ and $1\le r/p^2 \le 16\, 314$,
\item\label{lemma2:eq6}$p\in \{2,3\}$.
\end{enumerate}
\end{lemma}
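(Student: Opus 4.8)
The plan is to follow the template of Lemma~\ref{lemma1}, isolating the clean range of primes by an elementary monotonicity argument and relegating the finitely many remaining primes to a computer-assisted check. Here $S(p,2)=p+3$ by~\eqref{variousS}, so writing $m:=r/p^2$ (an integer with $\gcd(p,m)=1$ and $m\ge 1$), the target inequality $r\cdot S(p,2)\le p^{\frac{\log_2 r}{2}}$ becomes, after taking logarithms to base $2$,
\begin{equation}\label{prop:target}
\log_2(p+3)\le \frac{\log_2 r}{2}\bigl(\log_2 p-2\bigr).
\end{equation}
The first thing I would observe is that for every prime $p\ge 5$ the factor $\log_2 p-2$ is strictly positive, so the right-hand side of~\eqref{prop:target} is increasing in $r$, while the left-hand side is constant in $r$. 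Since $r$ is a multiple of $p^2$, the smallest admissible value is $r=p^2$ (that is, $m=1$); hence for $p\ge 5$ it suffices to verify~\eqref{prop:target} at $r=p^2$, where it reads $\log_2(p+3)\le (\log_2 p)^2-2\log_2 p$.

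Next I would dispatch all primes $p\ge 11$ at once. Using $p+3\le 2p$ one has $\log_2(p+3)\le 1+\log_2 p$, so it is enough to check $(\log_2 p)^2-3\log_2 p-1\ge 0$; setting $x:=\log_2 p$ this quadratic is nonnegative once $x\ge (3+\sqrt{13})/2<3.31$, which holds for every $p\ge 11$ since $\log_2 11>3.45$. By the monotonicity just noted this proves~\eqref{prop:target}, and hence the lemma, for all $p\ge 11$ and all admissible $r$, with no exceptions.

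It then remains to treat the four small primes $p\in\{2,3,5,7\}$, and this is where the stated exceptions arise. For $p\in\{2,3\}$ the coefficient $\log_2 p-2$ in~\eqref{prop:target} is nonpositive while the left-hand side is positive, so the inequality fails for every admissible $r$; these primes are therefore genuine exceptions for all $m$, giving case~\eqref{lemma2:eq6}. For $p\in\{5,7\}$ the coefficient is positive, so~\eqref{prop:target} holds precisely when $\log_2 r$ exceeds the threshold $2\log_2(p+3)/(\log_2 p-2)$; solving for $m=r/p^2$ and rounding to the nearest integer coprime to $p$ produces the exact cutoffs, namely the inequality fails exactly for $1\le m\le 6$ when $p=7$ and for $1\le m\le 16\,314$ when $p=5$, which are cases~\eqref{lemma2:eq4} and~\eqref{lemma2:eq5}. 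I expect the main technical obstacle to be pinning down the cutoff for $p=5$: there the effective exponent governing the growth in $m$, namely $\tfrac{1}{2}\log_2 5-1\approx 0.161$, is small, so the threshold on $m$ is large (of order $10^4$) and sensitive to the precision of the logarithms, making a careful computer-assisted evaluation, rather than a back-of-the-envelope estimate, essential for certifying that $16\,314$ is the last failing value.
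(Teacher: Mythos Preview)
Your argument is correct and follows the same template as the paper's proof: reduce to a monotonicity statement in $r$, dispatch all sufficiently large primes by a uniform elementary estimate, and treat the remaining small primes by a direct (computer-assisted) computation. The only noteworthy difference is bookkeeping: you take logarithms and work with the inequality $\log_2(p+3)\le \tfrac{\log_2 r}{2}(\log_2 p-2)$, which lets you exploit the minimal value $r=p^2$ and the crude bound $p+3\le 2p$ to clear every prime $p\ge 11$ at once, whereas the paper's multiplicative decomposition $p^{\frac{\log_2 r}{2}}=p^{\frac{\log_2 p}{2}}\cdot p^{\frac{\log_2(r/p)}{2}}$ only yields the uniform estimate from $p\ge 19$ and leaves $p\in\{11,13,17\}$ (as well as $5,7$) to the computer check. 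Your route is therefore marginally sharper and more self-contained, but the two proofs are otherwise the same.
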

\begin{proof}
Here $S(p,2)=p+3$. The proof is very similar to the proof of Lemma~\ref{lemma1} and it basically follows from straightforward computations.  When $p\ge 19$, it can be verified that $p^{\log_2(p)/2}>(p+3)p$ and $p^{1/2}>2$. Therefore
\begin{align*}
p^{2\frac{\log_2 r}{4}}&=p^{\frac{\log_{2}p}{2}}\cdot p^{\frac{\log_2(r/p)}{2}}>(p+3)p\cdot p^{\frac{\log_2(r/p)}{2}}\\
&>(p+3)p\cdot 2^{\log_2(r/p)}=(p+3)p\cdot (r/p)=rS(p,2).
\end{align*}

Suppose now $p<19$. If $p\le 3$, then we obtain part~\eqref{lemma2:eq6}. If $p>3$, then $p\in \{5,7,11,13\}$ and part~\eqref{lemma2:eq4} and~\eqref{lemma2:eq5} follow with computer assisted computations by dealing with each case at the time. 
\end{proof}

\begin{lemma}\label{lemma3}
Let $p$ be a prime number and let $r$ be a positive multiple of $p^3$ with $\gcd(p,r/p^3)=1$ and $r/p^3>1$. Then $r\cdot S(p,3)\le p^{3\frac{\log_2r}{4}}$ unless one of the following holds
\begin{enumerate}
\item\label{lemma3:eq5}$p= 5$ and $1\le r/p^3\le 2$,
\item\label{lemma3:eq6}$p\in \{2,3\}$.
\end{enumerate}
\end{lemma}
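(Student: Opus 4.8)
The plan is to follow exactly the template established in Lemmas~\ref{lemma1} and~\ref{lemma2}, now with $S(p,3)=2p^2+2p+4$ and the exponent $3\log_2 r/4$. First I would split off the prime-power contribution of $p$ from the cofactor $r/p^3$, writing $p^{3\frac{\log_2 r}{4}}=p^{\frac{3\log_2 p}{4}}\cdot p^{\frac{3\log_2(r/p^3)}{4}}$, so that the target inequality $r\,S(p,3)\le p^{3\frac{\log_2 r}{4}}$ becomes a product of a ``local'' factor in $p$ alone and a ``global'' factor in the cofactor $m:=r/p^3$. Recalling that $r=p^3 m$, the left side factors as $p^3 m\cdot(2p^2+2p+4)$, so the inequality I must establish is
\begin{equation*}
p^3 m\,(2p^2+2p+4)\le p^{\frac{3\log_2 p}{4}}\cdot p^{\frac{3\log_2 m}{4}}.
\end{equation*}

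The key step is to find the threshold prime above which the bound holds for every cofactor. Mimicking the earlier lemmas, I would seek a bound of the shape $p^{\frac{3\log_2 p}{4}}\ge(2p^2+2p+4)\,p^3$ for the local factor together with $p^{\frac{3}{4}\log_2 m}\ge 2^{\log_2 m}=m$ for the global factor; the latter holds as soon as $p^{3/4}\ge 2$, i.e.\ $p\ge 3$ (and even $p=2$ is borderline). Multiplying the two and using $m=r/p^3$ then yields $p^{3\frac{\log_2 r}{4}}\ge(2p^2+2p+4)p^3\cdot m=r\,S(p,3)$, as desired. The local inequality $p^{\frac{3\log_2 p}{4}}\ge(2p^2+2p+4)p^3$ is equivalent, after taking $\log_2$, to $\frac{3}{4}(\log_2 p)^2\ge \log_2(2p^2+2p+4)+3\log_2 p$, and since the left side grows like $(\log_2 p)^2$ while the right grows like $5\log_2 p$, this holds for all sufficiently large $p$; a direct computer-assisted check pins down the smallest such $p$, which I expect to be around $p=7$ or $p=11$.

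For the remaining small primes $p$ below this threshold, the clean inequality fails and I would treat each prime $p\in\{2,3,5\}$ (and possibly the first prime above, if the threshold argument does not already cover $p=7$) individually by a finite computation. For each such fixed $p$, the inequality $p^3 m(2p^2+2p+4)\le p^{3\frac{\log_2(p^3 m)}{4}}$ becomes a one-variable inequality in $m$; since the right-hand exponent grows quadratically in $\log_2 m$ while the left side is linear in $m$, the inequality holds for all large enough $m$, and one checks the finitely many small $m$ coprime to $p$ by computer. This is precisely how the exceptional ranges in parts~\eqref{lemma3:eq5} and~\eqref{lemma3:eq6} are delimited: for $p=5$ the bound fails only for the smallest cofactors (here $r/p^3\le 2$), while for $p\in\{2,3\}$ it fails across a range broad enough that the lemma simply quarantines these two primes wholesale.

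The main obstacle I anticipate is purely bookkeeping rather than conceptual: correctly locating the threshold prime and, for the borderline primes, correctly determining the finite exceptional set of cofactors $m$. Because $S(p,3)$ is a genuine degree-$2$ polynomial in $p$ (unlike the constant $S(p,1)=2$ and linear $S(p,2)=p+3$ of the previous lemmas), the crossover between the polynomial left side and the super-polynomial right side $p^{\frac{a^2}{4}\log_2 p}$ with $a=3$ occurs at a smaller prime, so the set of exceptional primes shrinks compared to Lemmas~\ref{lemma1} and~\ref{lemma2}; verifying that $p=5$ admits only the two exceptional cofactors $r/p^3\in\{1,2\}$ (and no larger ones) requires confirming that the inequality switches on and stays on past $m=2$, which is the one place where a careful numerical check is indispensable.
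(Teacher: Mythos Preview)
Your overall plan is exactly the template the paper intends (it omits the proof, saying it is ``very similar to the proof of Lemmas~\ref{lemma1} and~\ref{lemma2}''): split the right-hand side multiplicatively, establish a clean inequality above some threshold prime, and finish the remaining small primes by a finite computer check.

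There is, however, an arithmetic slip in your decomposition that would make the argument fail as written. Writing $r=p^3m$ gives $\log_2 r=3\log_2 p+\log_2 m$, hence
\[
p^{\,3\frac{\log_2 r}{4}}=p^{\frac{9\log_2 p}{4}}\cdot p^{\frac{3\log_2 m}{4}},
\]
not $p^{\frac{3\log_2 p}{4}}\cdot p^{\frac{3\log_2 m}{4}}$. With your stated exponent the ``local'' inequality $p^{\frac{3\log_2 p}{4}}\ge (2p^2+2p+4)p^3$ requires roughly $\tfrac{3}{4}(\log_2 p)^2\ge 5\log_2 p+1$, which first holds only for $p$ above $100$, far from the $p\approx 7$ you expect. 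With the correct exponent $\tfrac{9}{4}(\log_2 p)^2$, the threshold does land at $p=7$, matching the statement of the lemma. Alternatively, you can mimic Lemma~\ref{lemma2} exactly and peel off a single factor of $p$ (writing $r=p\cdot(r/p)$), which yields the pair of conditions $p^{\frac{3\log_2 p}{4}}>(2p^2+2p+4)p$ and $p^{3/4}>2$; this gives a somewhat larger crude threshold but leaves only finitely many primes to check by computer, as in the paper. Either corrected version completes the proof along the lines you describe.
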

\begin{proof}
The proof is very similar to the proof of Lemmas~\ref{lemma1} and~\ref{lemma2} and we omit it.
\end{proof}

\begin{lemma}\label{lemma4}
Let $p$ be a prime number and let $r$ be a positive multiple of $p^4$ with $\gcd(p,r/p^4)=1$. Then $r\cdot S(p,4)\le p^{4\frac{\log_2r}{4}}$ unless one of the following holds
\begin{enumerate}
\item\label{lemma4:eq8}$p=3$ and $1\le r/p^4\le 116$,
\item\label{lemma4:eq9}$p=2$.
\end{enumerate}
\end{lemma}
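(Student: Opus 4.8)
The plan is to mirror the structure of the proofs of Lemmas~\ref{lemma1}, \ref{lemma2}, and \ref{lemma3}, splitting the argument into a ``large prime'' regime handled by a uniform inequality and a ``small prime'' regime handled by finite computer-assisted checks. Here $S(p,4)=p^4+3p^3+4p^2+3p+5$, and the target inequality $r\cdot S(p,4)\le p^{4\log_2 r/4}=p^{\log_2 r}$ can be rewritten by factoring $r=p^4\cdot(r/p^4)$, so that $p^{\log_2 r}=p^{\log_2 p^4}\cdot p^{\log_2(r/p^4)}=p^{4\log_2 p}\cdot p^{\log_2(r/p^4)}$. First I would establish, for all sufficiently large $p$, the two clean inequalities $p^{4\log_2 p}>S(p,4)\cdot p^4$ and $p^{\log_2 p}>2^{\log_2 p}=2$, the second of which just says $p>2$. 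Equivalently I want $p^{\log_2 p}>p^4\cdot(r/p^4)$ to follow from $p^{\log_2(r/p^4)}\ge 2^{\log_2(r/p^4)}=r/p^4$ together with the leading bound on $S(p,4)p^4$.

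The key computation is to find the threshold prime beyond which $p^{4\log_2 p}>p^4\cdot S(p,4)$ holds. Since $S(p,4)\sim p^4$ for large $p$, this amounts to $p^{4\log_2 p}>p^8\cdot(1+o(1))$, i.e. $4\log_2 p>8$, i.e. $\log_2 p>2$, i.e. $p>4$ in the leading order; but the lower-order terms in $S(p,4)$ push the true threshold higher, so I would check numerically which prime $p_0$ first satisfies $p^{4\log_2 p}\ge (p^4+3p^3+4p^2+3p+5)\,p^4$. I expect $p_0$ to be moderate (somewhere around $7$ or $11$), consistent with the pattern that higher powers $a$ force out the exceptional primes more aggressively (compare Lemma~\ref{lemma3}, where only $p\le 5$ survive for $a=3$). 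Once $p\ge p_0$ is fixed and $p>2$, the chain
\begin{align*}
p^{4\frac{\log_2 r}{4}}=p^{\log_2 r}
&=p^{4\log_2 p}\cdot p^{\log_2(r/p^4)}
>S(p,4)\cdot p^4\cdot 2^{\log_2(r/p^4)}\\
&=S(p,4)\cdot p^4\cdot(r/p^4)
=r\cdot S(p,4)
\end{align*}
delivers the desired inequality with no exceptions.

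It then remains to treat the finitely many primes $p<p_0$. For each such prime the inequality $r\cdot S(p,4)\le p^{\log_2 r}$ is a condition on the single integer $r/p^4$ (coprime to $p$): as $r/p^4$ grows, the right-hand side $p^{\log_2 r}=p^{4\log_2 p}\cdot(r/p^4)^{\log_2 p}$ grows like $(r/p^4)^{\log_2 p}$ while the left-hand side grows only linearly in $r/p^4$, so for $p$ with $\log_2 p>1$ (that is, $p\ge 3$) the inequality holds for all sufficiently large $r/p^4$ and fails only on an initial segment. I would determine that initial segment for each $p<p_0$ by direct computation, expecting $p=2$ to fail for every $r/p^4$ (since then $\log_2 p=1$ and both sides are essentially linear, with the constant from $S$ never beaten), which yields part~\eqref{lemma4:eq9}, and expecting $p=3$ to fail precisely up to some finite bound, which the statement records as $r/p^4\le 116$ in part~\eqref{lemma4:eq8}. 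The main obstacle is purely bookkeeping: pinning down the exact cutoff $116$ for $p=3$ (and confirming no intermediate prime $5\le p<p_0$ leaves exceptions) requires the finite computer-assisted verification, but no conceptual difficulty arises beyond the monotonicity already exploited above.
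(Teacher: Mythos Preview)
Your proposal is correct and follows essentially the same template the paper uses in Lemmas~\ref{lemma1} and~\ref{lemma2} (the paper omits the proof of Lemma~\ref{lemma4}, saying only that it is analogous): a uniform inequality for large primes plus a finite computer check for the remaining small ones. Two minor remarks: your split $\log_2 r=\log_2 p^4+\log_2(r/p^4)$ differs cosmetically from the paper's pattern $\log_2 r=\log_2 p+\log_2(r/p)$, and in fact yields the sharper threshold $p_0=5$ (one checks $5^{4\log_2 5}\approx 3.1\times 10^6>S(5,4)\cdot 5^4=7\times 10^5$, while the inequality fails at $p=3$), so no intermediate primes need individual treatment; and the aside ``$2^{\log_2 p}=2$'' is a slip (it equals $p$), though your displayed chain and the conclusion it supports are correct.
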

\begin{proof}
The proof is omitted.
\end{proof}

\begin{lemma}\label{lemma5}
Let $p$ be a prime number and let $r$ be a positive multiple of $p^5$ with $\gcd(p,r/p^5)=1$. Then $r\cdot S(p,5)\le p^{5\frac{\log_2r}{4}}$ unless one of the following holds
\begin{enumerate}
\item\label{lemma5:eq8}$p=3$ and $1\le r/p^5\le 11$,
\item\label{lemma5:eq9}$p=2$.
\end{enumerate}
\end{lemma}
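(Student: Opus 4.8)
The plan is to follow the template already established in Lemmas~\ref{lemma1} through~\ref{lemma4}, adapted to the case $a=5$ where $S(p,5)=2p^6+2p^5+6p^4+6p^3+6p^2+4p+6$. The target inequality is $r\cdot S(p,5)\le p^{5\frac{\log_2 r}{4}}$, and the natural split is $r=p^5\cdot(r/p^5)$ with $\gcd(p,r/p^5)=1$. Writing $m:=r/p^5$, the right-hand side factorizes as $p^{\frac{5\log_2 p}{4}\cdot 5}\cdot p^{\frac{5\log_2 m}{4}}=p^{\frac{25\log_2 p}{4}}\cdot p^{\frac{5\log_2 m}{4}}$, while the left-hand side is $p^5 m\cdot S(p,5)$. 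The key observation, exactly as before, is that $p^{\frac{5\log_2 m}{4}}=2^{\frac{5}{4}\log_2 p\cdot\log_2 m}\ge 2^{\log_2 m}=m$ whenever $\frac{5}{4}\log_2 p\ge 1$, which holds for every prime $p\ge 2$ since $\log_2 2=1>4/5$. So the factor involving $m$ on the right dominates $m$ on the left for \emph{all} primes, and the whole problem reduces to the single-variable inequality obtained by setting $m=1$.

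First I would reduce to showing $p^5\cdot S(p,5)\le p^{\frac{25\log_2 p}{4}}$, i.e. $S(p,5)\le p^{\frac{25\log_2 p}{4}-5}$, for $p$ large, and then treat the finitely many small primes by direct computation. Concretely, I expect that for all sufficiently large $p$ one has the clean pair of inequalities $p^{\frac{25\log_2 p}{4}}>p^5\cdot S(p,5)$ and $p^{5/4}>2$; the second of these guarantees the domination of the $m$-factor, and the first handles the base case $m=1$. Combining them by the same chain as in the earlier lemmas,
\begin{align*}
p^{5\frac{\log_2 r}{4}}&=p^{\frac{25\log_2 p}{4}}\cdot p^{\frac{5\log_2 m}{4}}>p^5\cdot S(p,5)\cdot 2^{\log_2 m}\\
&=p^5\cdot S(p,5)\cdot m=r\cdot S(p,5),
\end{align*}
which is the desired bound. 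One then checks which primes fail $p^{\frac{25\log_2 p}{4}}>p^5 S(p,5)$: these will be the small primes, and since $S(p,5)$ has leading term $2p^6$, the inequality $p^{\frac{25\log_2 p}{4}-5}>2p^6(1+o(1))$ becomes valid once $\frac{25\log_2 p}{4}-5$ comfortably exceeds $6$, i.e. once $\log_2 p>44/25$, roughly $p\ge 5$. So I anticipate the genuine exceptions to be confined to $p\in\{2,3\}$ together with possibly a bounded range of $r/p^5$ for $p=3$, matching the stated conclusion.

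The main obstacle is not conceptual but arithmetical: pinning down the exact threshold prime and, for the borderline primes, the precise range of $m=r/p^5$ for which the inequality genuinely fails. For $p=2$ one expects failure for all $m$ (hence the blanket exception in~\eqref{lemma5:eq9}), while for $p=3$ the inequality will hold once $m$ is large enough, so one must solve $3^5\cdot m\cdot S(3,5)\le 3^{\frac{5\log_2(3^5 m)}{4}}$ for the cutoff on $m$ and verify it is $11$. This is a single monotone-in-$m$ comparison, so the computer-assisted check is routine, but care is needed because $S(3,5)$ is a fixed large integer and the transition happens at a specific value of $m$; one must confirm that $m=11$ fails while $m=12$ succeeds, and similarly rule out any further exceptional primes such as $p=5$ by an explicit evaluation. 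As in Lemmas~\ref{lemma3}--\ref{lemma4}, the bulk of this verification would be delegated to a short computation and the proof text kept brief.
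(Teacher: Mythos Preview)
Your approach is correct and is exactly the template the paper uses (and here omits) for this lemma, following Lemmas~\ref{lemma1}--\ref{lemma4}. One tiny slip: for $p=3$ the value $m=12$ is not coprime to $3$ (so it is not an admissible $r/p^5$), and in fact the real-variable inequality still narrowly fails there; the first admissible $m$ beyond $11$ is $m=13$, which together with your monotonicity observation is what actually needs to be checked.
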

\begin{proof}
The proof is omitted.
\end{proof}

\begin{lemma}\label{lemma6}
Let $p$ be a prime number, let $a\ge 6$ be an integer and let $r$ be a positive multiple of $p^a$ with $\gcd(p,r/p^a)=1$. Then $r\cdot S(p,a)\le p^{a\frac{\log_2r}{4}}$ unless one of the following holds
\begin{enumerate}
\item\label{lemma6:eq5}$p= 3$ and $r\in \{729, 1\, 458, 2\, 187, 2\, 916\}$,  
\item\label{lemma6:eq6}$p= 2$.
\end{enumerate}
\end{lemma}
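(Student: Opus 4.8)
The plan is to mirror the structure of Lemmas~\ref{lemma1}--\ref{lemma5}: separate the contribution of the prime power $p^a$ from that of the coprime cofactor $m:=r/p^a$, reduce to a single inequality in $a$ (the case $m=1$), and then dispatch the finitely many small cases by direct computation. Recalling from Lemma~\ref{lemma0} that $S(p,a)=c(p)p^{a^2/4}$ for $a\ge 6$, and writing $r=p^am$ with $\gcd(p,m)=1$ so that $\log_2 r=a\log_2 p+\log_2 m$, I would record the identity
\begin{align*}
\frac{p^{a\frac{\log_2 r}{4}}}{r\,S(p,a)}
&=\frac{p^{\frac{a^2}{4}\log_2 p}\cdot p^{\frac{a}{4}\log_2 m}}{p^am\cdot c(p)p^{\frac{a^2}{4}}}
=\underbrace{\frac{p^{\frac{a^2}{4}(\log_2 p-1)-a}}{c(p)}}_{=:\,F(p,a)}\cdot\Bigl(\frac{p^{a/4}}{2}\Bigr)^{\log_2 m}.
\end{align*}
Since $a\ge 6$ forces $p^{a/4}\ge 2^{6/4}>2$, the factor $(p^{a/4}/2)^{\log_2 m}$ is at least $1$ for every $m\ge 1$ and strictly increasing in $m$. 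Thus the problem splits cleanly: whenever $F(p,a)\ge 1$ the desired bound holds for all $m$, and the only possible exceptions arise when $F(p,a)<1$ together with an $m$ too small for the second factor to make up the deficit.

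The heart of the argument is then the inequality $F(p,a)\ge 1$, equivalently
$$g(a):=\frac{a^2(\log_2 p-1)}{4}-a\ \ge\ \log_p c(p).$$
For $p\ge 3$ one has $g'(a)=\tfrac{a(\log_2 p-1)}{2}-1>0$ on $a\ge 6$, so $g$ is increasing and it suffices to test the left endpoint. For every $p\ge 5$ the value $g(6)=9(\log_2 p-1)-6$ already equals $\approx 5.90$ at $p=5$ and grows with $p$, whereas $\log_p c(p)$ decreases with $p$ (as $C(p)\downarrow 1$); hence $F(p,a)\ge 1$ for all $a\ge 6$ and all $p\ge 5$, and there are no exceptions for $p\ge 5$. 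For $p=3$ a direct evaluation gives $g(8)\approx 1.36>\log_3 c(3)\approx 1.22$, so $F(3,a)\ge 1$ for all $a\ge 8$; but $g(6)\approx-0.74$ and $g(7)\approx 0.17$ both lie below $\log_3 c(3)$, leaving $p=3$, $a\in\{6,7\}$ as the only candidates for exceptions.

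It remains to settle these two finite families. For $p=3$ and fixed $a\in\{6,7\}$ the full ratio $F(3,a)\cdot(3^{a/4}/2)^{\log_2 m}$ is strictly increasing in $m$ and unbounded, so there is a threshold below which, and only below which, the bound fails; a short check over the finitely many $m$ coprime to $3$ beneath this threshold isolates the exceptions. For $a=6$ these are $m\in\{1,2,4\}$ (the value $m=3$ being excluded by $\gcd(3,m)=1$), giving $r\in\{729,1458,2916\}$, with the next admissible $m=5$ already satisfying the bound; for $a=7$ only $m=1$ fails, giving $r=2187$, since $m=2$ already works. Finally $p=2$ is listed as a blanket exception: there $\log_2 p-1=0$ makes $F(2,a)=2^{-a}/c(2)$ tiny, so already $m=1$ (that is, $r=2^a$) violates the bound, and no attempt is made to enumerate the $p=2$ cases in this lemma.

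The main obstacle is organizational rather than computational: the statement ranges over infinitely many exponents $a\ge 6$, so brute enumeration is impossible. The device that tames this is the monotonicity of $g$ combined with the factorization of the ratio into $F(p,a)$ and the manifestly-$\ge 1$ term $(p^{a/4}/2)^{\log_2 m}$; together these collapse all $(p,a)$ to a single boundary check ($a=6$ for $p\ge 5$, $a=8$ for $p=3$), after which only the residual families $p=3$, $a\in\{6,7\}$ need the finite, computer-assisted enumeration already used in Lemmas~\ref{lemma1} and~\ref{lemma2}.
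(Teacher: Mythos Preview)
Your proof is correct and follows essentially the same approach the paper uses in Lemmas~\ref{lemma1} and~\ref{lemma2} (the paper omits the proof of Lemma~\ref{lemma6} itself): split $r=p^am$, factor the target inequality into a piece depending only on $(p,a)$ and a piece $(p^{a/4}/2)^{\log_2 m}\ge 1$ controlling the cofactor, then reduce to boundary checks. Your monotonicity argument via $g(a)$ cleanly handles the infinite range $a\ge 6$, and the residual enumeration for $p=3$, $a\in\{6,7\}$ recovers exactly the listed exceptions $r\in\{729,1458,2187,2916\}$.
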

\begin{proof}
The proof is omitted.
\end{proof}

\begin{corollary}\label{corollary1}
Let $R$ be a finite group having order $r$ divisible by $p^a$, where $p$ is a prime number, $a$ is a positive integer and $\gcd(r/p^a,p)=1$. Then either $rS(p,a)\le p^{a\frac{\log_2r}{4}}$ or one of the following holds
\begin{enumerate}
\item\label{corollary1:1}$R$ satisfies Theorem~$\ref{thrm:main}$,
\item\label{corollary1:2}$p\in \{5,7,11,13,17\}$ and $a=1$,
\item\label{corollary1:3}$p=3$ and $a\le 3$,
\item\label{corollary1:4}$p=2$.
\end{enumerate}
\end{corollary}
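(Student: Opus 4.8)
The plan is to read the statement off Lemmas~\ref{lemma1}--\ref{lemma6} by a case analysis on the exponent $a$, and then to dispose of the finitely many genuine exceptions by a direct estimate. First I would split according to the value of $a$: for $a\in\{1,2,3,4,5\}$ I invoke Lemma~\ref{lemma1}, \ref{lemma2}, \ref{lemma3}, \ref{lemma4} or \ref{lemma5} respectively, and for $a\ge 6$ I invoke Lemma~\ref{lemma6}. In every case the relevant lemma gives the desired inequality $rS(p,a)\le p^{a\log_2 r/4}$ unless $(p,a,r/p^a)$ lies in one of the explicitly listed exceptional families, so it suffices to track what those families contribute.

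Next I would sort these exceptional families against the alternatives of the corollary. Every exception with $p=2$ is recorded as alternative~\eqref{corollary1:4}; every exception with $p=3$ and $a\le 3$ (these occur only in Lemmas~\ref{lemma1}--\ref{lemma3}) is recorded as alternative~\eqref{corollary1:3}; and every exception with $a=1$ and $p\in\{5,7,11,13,17\}$ (all coming from part~\eqref{lemma1:eq3} of Lemma~\ref{lemma1}) is recorded as alternative~\eqref{corollary1:2}. What survives this bookkeeping is a short list of exceptional families --- namely $p\in\{19,23\}$ with $a=1$, $p\in\{5,7\}$ with $a=2$, $p=5$ with $a=3$, $p=3$ with $a\in\{4,5\}$, and $p=3$ with $a\ge 6$ (the four orders of Lemma~\ref{lemma6}) --- and in each of these the bound on $r/p^a$ forces $r$ to lie in a bounded range. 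For such $r$ the good bound fails by hypothesis, so I must establish alternative~\eqref{corollary1:1}, namely that $R$ already satisfies Theorem~\ref{thrm:main}.

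To prove Theorem~\ref{thrm:main} for these bounded orders I would return to the estimate~\eqref{eq:AA}, write $r=\prod_{i=1}^\ell p_i^{a_i}$, and split the primes into ``good'' ones, for which \eqref{ex} holds, and the remaining ``bad'' ones (necessarily among $2,3,5,7,11,13,17,19,23$, since every prime $\ge 29$ is good). Applying \eqref{ex} to the good primes and the exact values from~\eqref{variousS} to the bad primes, and using $\prod_i p_i^{a_i\log_2 r/4}=r^{\log_2 r/4}$, turns~\eqref{eq:AA} into
\begin{equation*}
r^{-1}\prod_{i=1}^\ell rS(p_i,a_i)\le r^{\frac{\log_2 r}{4}-1}\prod_{\text{bad }i}\frac{rS(p_i,a_i)}{p_i^{a_i\log_2 r/4}},
\end{equation*}
where the good factors were dropped because each is at most $1$. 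It then remains to check that the right-hand side is at most $7.3722\cdot r^{\frac{\log_2 r}{4}+1.5315}$, equivalently that $\prod_{\text{bad }i} rS(p_i,a_i)p_i^{-a_i\log_2 r/4}\le 7.3722\,r^{2.5315}$. Since $r$ is bounded in each surviving family and the expression depends only on the factorisation of $r$, this is a finite, computer-assisted verification.

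The delicate point --- and the step I expect to be the main obstacle --- is precisely this last verification, because a single $r$ in these ranges may carry several bad primes at once (for instance $r=2100=2^2\cdot 3\cdot 5^2\cdot 7$ has four) or a large $2$- or $3$-part (for instance $2^{11}\mid r$ is permitted when $p=19$, as $2048\le 3784$). Bounding each bad factor crudely through Lemma~\ref{lemma0}, that is by $7.3722\,r$, is far too lossy once three or more bad primes occur; I would instead have to retain the genuine polynomial size of $S(p_i,a_i)$ recorded in~\eqref{variousS}, so that the many bad primes contribute only small constants. The slack built into the constant $7.3722$ and into the surplus exponent $1.5315$ is exactly what lets the inequality survive uniformly across the whole bounded range, and confirming this uniformly --- rather than order by order --- is where the real work lies.
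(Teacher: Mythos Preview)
Your proposal is correct and follows essentially the same route as the paper: reduce via Lemmas~\ref{lemma1}--\ref{lemma6}, sort the generic exceptions into alternatives~\eqref{corollary1:2}--\eqref{corollary1:4}, and then verify Theorem~\ref{thrm:main} directly for the finitely many remaining orders by computing the bound~\eqref{eq:AA} (the paper's function $f(r)=r^{\ell-1}\prod_i S(p_i,a_i)$) against $7.3722\,r^{\log_2 r/4+1.5315}$ with a computer. The only cosmetic difference is that the paper checks $f(r)$ order by order without first separating good and bad primes, whereas you first drop the good-prime factors; since those factors are at most~$1$, this changes nothing.
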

\begin{proof}
Suppose that $rS(p,a)>p^{a\frac{\log_2r}{4}}$. In particular, $(p,a)$ satisfies the conclusions in Lemmas~\ref{lemma1}--\ref{lemma6}. We consider in turn each of these cases.

When $a=1$, Lemma~\ref{lemma1} holds. In particular, we only need to deal with part~\eqref{lemma1:eq1} and~\eqref{lemma1:eq2} of Lemma~\ref{lemma1}, because when $p\le 17$ we see that parts~\eqref{corollary1:2}--\eqref{corollary1:4} are satisfied. In particular, we only have a finite number of cases to consider. Let $r=p_1^{a_1}\cdots p_\ell^{a_\ell}$ be the factorization of $r$ into distinct prime powers. Let us consider the function
$$f(r):=r^{\ell-1}\prod_{i=1}^\ell S(p_i,a_i).$$
By Section~\ref{group}, if $f(r)\le 7.3722\cdot r^{\log_2r/4+1.5315}$, then Theorem~\ref{thrm:main} holds and hence part~\eqref{corollary1:1} is satisfied. Therefore, we may suppose that $f(r)> 7.3722\cdot r^{\log_2r/4+1.5315}$.
We have implemented this function in a computer and we have checked that no $r$ in the range described in Lemma~\ref{lemma1} parts~\eqref{lemma1:eq1} and~\eqref{lemma1:eq2} satisfies $f(r)>7.3722r^{\log_2r/4+1.5315}$. 

When $a=2$, Lemma~\ref{lemma2} holds. In particular, we only need to deal with part~\eqref{lemma2:eq4} and~\eqref{lemma2:eq5} of Lemma~\ref{lemma2}, because when $p\le 3$ we see that parts~\eqref{corollary1:2}--\eqref{corollary1:4} are satisfied. In particular, we only have a finite number of cases to consider. We have checked that no $r$ in the range described in Lemma~\ref{lemma2} parts~\eqref{lemma2:eq4} and~\eqref{lemma2:eq5} satisfies $f(r)>7.3722\cdot r^{\log_2r/4+1.5315}$. 

When $a=3$, Lemma~\ref{lemma3} holds. In particular, we only need to deal with part~\eqref{lemma3:eq5} of Lemma~\ref{lemma3}, because when $p\le 3$ we see that parts~\eqref{corollary1:3}--\eqref{corollary1:4} are satisfied. In particular, we only have a finite number of cases to consider. We have checked that no $r$ in the range described in Lemma~\ref{lemma3} part~\eqref{lemma3:eq5} satisfies $f(r)>7.3722\cdot r^{\log_2r/4+1.5315}$.

Finally, the cases $a\ge 4$ are analogous.
\end{proof}

\section{Proof of Theorem~\ref{thrm:main}}\label{final}
In this section, we complete the proof of Theorem~\ref{thrm:main} that we have begun in Section~\ref{group}. We argue by induction on $r$. Write $\varepsilon:=1.5315$.

 Let $\mathcal{I}$ be the collection of indices $i\in \{1,\ldots,\ell\}$ with $rS(p_i,a_i)>p_i^{a_i\log_2 r/4}$ and let $\mathcal{P}:=\{p_i\mid i\in\mathcal{I}\}$. From Lemmas~\ref{lemma1}--\ref{lemma6}, we have $\mathcal{P}\subseteq\{2,3,5,7,11,13,17,19,23\}$. Actually, from Corollary~\ref{corollary1},  either Theorem~\ref{thrm:main} holds or $\mathcal{P}\subseteq\{2,3,5,7,11,13,17\}$.
Moreover, if $i\in \mathcal{I}$ with $p_i\in \{5,7,11,13,17\}$, then $a_i=1$.

\smallskip

Assume there exists $i\in \mathcal{I}$ with $p_i\in \{5,7,11,13,17\}$ or with $p_i=3$ and $a_i= 2$ such that ${\bf N}_R(P_i)={\bf C}_R(P_i)$. Let $P_i$ be a Sylow $p_i$-subgroup of $R$ and observe that $P_i$ is cyclic of prime order when $p_i>3$ and $P_i$ is abelian when $p_i=3$. Then, from the Burnside $p$-complement theorem, $R_i$ contains a normal subgroup $N$ with $R=NP_i$ and $N\cap P_i=1$. Thus $R$ is the semidirect product of $N$ with $P_i$. For the moment, let us suppose that $(p_i,a_i)\ne (3,2)$ and we do come back to this case later.
Let $x$ be the number of subgroups of $N$. We claim that $R$ has at most $x(1+r/p_i)$ subgroups. Indeed, the number of subgroups of $R$ contained in $N$ is $x$ and, if $H$ is a subgroup of  $R$ not contained in $N$, then $H=\langle K,P_i^g\rangle$ for some subgroup $K$ of $N$ and for some $g\in R$. Observe that $g$ can be chosen in a transversal of $P_i$ in $R$ and hence we have $r/p_i$ choices for $g$.   Therefore the number of subgroups of $R$ is at most
\begin{align*}
x\left(1+\frac{r}{p_i}\right)&\le 2\cdot 7.3722\cdot \left(\frac{r}{p_i}\right)^{\frac{\log_2(r/p_i)}{4}+\varepsilon}\left(1+\frac{r}{p_i}\right). 
\end{align*}
Moreover,
\begin{align*}
r^{\frac{\log_2r}{4}+\varepsilon}&=\left(\frac{r}{p_i}\right)^{\frac{\log_2r}{4}+\varepsilon}p_i^{\frac{\log_2 r}{4}+\varepsilon}=
\left(\frac{r}{p_i}\right)^{\frac{\log_2(r/p_i)}{4}+\varepsilon}
\left(\frac{r}{p_i}\right)^{\frac{\log_2 p_i}{4}}
p_i^{\frac{\log_2 r}{4}+\varepsilon}\\
&=
\left(\frac{r}{p_i}\right)^{\frac{\log_2(r/p_i)}{4}+\varepsilon}
\left(\frac{r}{p_i}\right)^{\frac{\log_2 p_i}{4}}
r^{\frac{\log_2 p_i}{4}}
p_i^\varepsilon\\
&=
\left(\frac{r}{p_i}\right)^{\frac{\log_2(r/p_i)}{4}+\varepsilon}
r^{\frac{\log_2 p_i}{2}}
p_i^{\varepsilon-\frac{\log_2p_i}{4}}.
\end{align*}
When $p_i\ge 5$, going through the various possibilities for $p_i$, it is not hard to verify that $r^{\log_2p_i/2}>r$ and $p^{\varepsilon-\log_2p_i/4}>2$ and hence
$$r^{\frac{\log_2 p_i}{2}}
p_i^{\varepsilon-\frac{\log_2p_i}{4}}>1+\frac{r}{p_i}$$
and the theorem follows in this case. When $(p_i,a_i)=(3,2)$, $P_i$ is no longer cyclic of prime order. However, $P_i$ has at most $5$ non-identity subgroups. Thus arguing as above, we deduce that the number of subgroups of $R$ is at most $x(1+5\cdot r/9)$.
Now, we may repeat the computations above (with minor modifications) and we obtain that the theorem follows.

\smallskip

For the rest of the argument we may suppose that, for every $i\in\mathcal{I}$ with $p_i\in\{3,5,7,11,13,17\}$, either ${\bf N}_R(P_i)>{\bf C}_R(P_i)$ or $(p_i,a_i)\in \{(3,1),(3,3)\}$. When ${\bf N}_R(P_i)>P_i$, the number of Sylow $p_i$-subgroups of $R$ is at most $|R|/2p_i^{a_i}$. 
Let 
\begin{align*}
\mathcal{J}&:=\{i\in \mathcal{I}\mid p_i\in \{5,7,11,13,17\}\},\\
\mathcal{J}'&:=\{i\in \mathcal{I}\mid p_i\in \{2,3\}\}.
\end{align*}
In the particular case that there exists $i\in\mathcal{I}$ with $p_i=3$ and $a_i=2$, we do include the index $i$ in $\mathcal{J}$ and remove it from $\mathcal{J}'$.

With this slight improvement and with this notation, we may go back to~\eqref{eq:AA} and deduce that $R$ has at most 
\begin{equation}\label{improved}
r^{-1}\prod_{i\in\mathcal{J}'}r\cdot S(p_i,a_i)\prod_{i\in \mathcal{J}}\frac{r}{2}\cdot S(p_i,a_i)\prod_{\substack{i=1\\i\notin \mathcal{I}}}^\ell r\cdot S(p_i,a_i)
\end{equation}
subgroups. Let us call $A$ this product.

Let $i\in\mathcal{J}$ with $a_i=1$. Observe that here we are only excluding the possibility that $p_i=3$ and $a_i=2$. We have
\begin{align*}
\frac{r}{2}\cdot S(p_i,a_i)&=\frac{r}{2}\cdot 2=r=r^{1-\frac{\log_2p_i}{4}}r^{\frac{\log_2 p_i}{4}}=r^{1-\frac{\log_2p_i}{4}}p_i^{a_i\frac{\log_2 r}{4}}.
\end{align*}
In the case that $p_i=3$ and $a_i=2$, we have
\begin{align*}
\frac{r}{2}\cdot S(3,2)&=\frac{r}{2}\cdot 6=3r=3r^{1-2\frac{\log_2p_i}{4}}r^{2\frac{\log_2 p_i}{4}}=3r^{1-2\frac{\log_2p_i}{4}}p_i^{a_i\frac{\log_2 r}{4}}\\
&\le r^{1-\frac{\log_2p_i}{4}}p_i^{a_i\frac{\log_2 r}{4}}.
\end{align*}
The last inequality follows from a  computation and is only valid when $r\ge 16$; however, when $r<16$, the veracity of Theorem~\ref{thrm:main} can be easily checked with a direct inspection.

From the previous paragraph and~\eqref{improved}, we obtain
\begin{align*}
A&\le r^{-1}\prod_{i\in\mathcal{J}'}S(p_i,a_i)\prod_{i\in\mathcal{J}}r^{1-\frac{\log_2p_i}{4}}p_i^{a_i\frac{\log_2r }{4}}\prod_{i\notin\mathcal{I}}p_i^{a_i\frac{\log_2r }{4}}\\
&=r^{-1}\prod_{i\in\mathcal{J}'}rS(p_i,a_i)\cdot r^{\sum_{j\in\mathcal{J}}1-\frac{\log_2p_i}{4}}\cdot\left(\prod_{i\notin \mathcal{J}'}p_i^{a_i}\right)^{\frac{\log_2 r}{4}}.
\end{align*}
The maximum of $\sum_{i\in\mathcal{J}}1-\log_2(p_i)/4$ is $1.5315=\varepsilon$ and is obtained when $\{p_i\mid i\in\mathcal{J}\}=\{ 3,5, 7, 11, 13\}$. 

If $\mathcal{J}'=\emptyset$, then 
$$A\le r^{\frac{\log_2r}{4}-1+\sum_{j\in\mathcal{J}}1-\frac{\log_2p_i}{4}}\le r^{\frac{\log_2r}{4}-1+\varepsilon}.$$

Assume $|\mathcal{J}'|=1$. Let $i\in \mathcal{J}'$. By Lemma~\ref{lemma0}, we have $S(p_i,a_i)\le 7.3722\cdot p_i^{a_i\log_2r/4}$ and hence
$$A\le 7.3722\cdot r^{\frac{\log_2r}{4}+\sum_{j\in\mathcal{J}}1-\frac{\log_2p_i}{4}}\le 7.3722r^{\frac{\log_2r}{4}+\varepsilon}.$$

Finally assume $|\mathcal{J}'|=2$. Thus $\mathcal{J}'=\{1,2\}$, $p_1=2$ and $p_2=3$. Recall that $a_2\in\{1,3\}$. Now, since the index corresponding to the prime $3$ is not in $\mathcal{J}$, the maximum of $\sum_{i\in\mathcal{J}}1-\log_2(p_i)/4$ is $0.9278$ and is obtained when $\{p_i\mid i\in\mathcal{J}\}=\{5, 7, 11, 13\}$. 
When $a_2=3$, it can be verified that
$$rS(3,a_i)\le r^{\varepsilon-0.9278}\cdot 3^{a_i\frac{\log_2r}{4}},$$
for every $r\ge 68$.  Therefore, when $r\ge 68$, using~\eqref{improved}, we get
$$A\le 7.3722\cdot r^{\frac{\log_2r}{4}+\varepsilon}.$$
The veracity of Theorem~\ref{thrm:main} for smaller values can be checked with a computer. 

Finally suppose $a_2=1$. When ${\bf N}_R(P_2)>P_2$, we may refine the factor $rS(3,a_1)=2r$ in~\eqref{improved} with simply $r$. Now
$$r= r^{1-\log_2(3)}\cdot 3^{a_i\frac{\log_2r}{4}}.$$  Therefore, using~\eqref{improved}, we get again
$$A\le 7.3722\cdot r^{\frac{\log_2r}{4}+\varepsilon}.$$
Assume then ${\bf N}_R(P_2)=P_2$. From Burnside $p$-complement theorem, there exists a normal subgroup $N$ of $R$ with $R=NP_i$ and $N\cap P_i=1$. As $N$ has order relatively prime to $N$, by applying the argument above to $N$, we deduce that $N$ has at most
$$7.3722\cdot \left(\frac{r}{3}\right)^{\frac{\log_2(r/3)}{4}+0.9278}$$
subgroups. Now $R$ has at most
$$7.3722\cdot \left(\frac{r}{3}\right)^{\frac{\log_2(r/3)}{4}+0.9278}\left(1+\frac{r}{3}\right)$$
subgroups. It is not hard to verify that this number is at most $7.3722\cdot r^{\log_2r/4+\varepsilon}$.
\thebibliography{10}
\bibitem{BEJ}A.~Ballester-Bolinches, R.~Esteban-Romero, P.~Jim\`enez-Seral, Bounds on the Number of Maximal Subgroups of Finite
Groups: Applications, \textit{Mathematics} \textbf{10} (2022), 1--25.
\bibitem{B}A.~V.~Borovik, On the Number of Maximal Soluble
Subgroups of a Finite Group, \textit{Comm. Algebra} \textbf{26}, 4041--4050.
\bibitem{BPS}A.~V.~Borovik, L.~Pyber, A.~Shalev, Maximal subgroups in finite and profinite groups,  \textit{Trans. Amer. Math. Soc.} \textbf{348} (1996), 3745--3761.

\bibitem{magma}
W. Bosma, C. Cannon, C. Playoust, The MAGMA algebra system I: The user language,
\textit{J. Symbolic Comput.} \textbf{24} (1997), 235--265.

\bibitem{LPS}M.~W.~Liebeck, L.~Pyber, A.~Shalev, On a conjecture of G.~E.~Wall, \textit{J. Algebra} \textbf{317} (2007), 184--197.
\bibitem{MorrisSpiga} J.~Morris, P.~Spiga, Asymptotic enumeration of Cayley digraphs, \textit{Israel J. Math.} \textbf{242} (2021), 401--459.

\bibitem{Shalev}A.~Shalev, Growth functions, p-adic analytic groups, and groups of finite coclass, \textit{J. London Math. Soc. (2)} \textbf{46} (1992), 111--122.

\bibitem{Spiga}P.~Spiga, Bipartite and Haar graphical representations of finite groups and their asymptotic enumeration, \textit{in preparation}.
\end{document}